\documentclass[11pt]{article}

\usepackage{amsmath,amssymb,amsthm,mathtools}
\usepackage[T1]{fontenc}
\usepackage{authblk}
\usepackage{enumitem}

\usepackage{tikz}
\usetikzlibrary{positioning,arrows.meta}

\usepackage{float}
\usepackage{placeins}
\usepackage{graphicx}

\title{\bf Celebrating the Day of $\pi$: Joyful Variations on Euler's Identity}

\author{\Large Takao Inou\'{e}}
\affil{\large Faculty of Informatics, Yamato University, \\ Osaka, Japan\footnote{Email: inoue.takao@yamato-u.ac.jp; \\ Personal Email: takaoapple@gmail.com \\ [I prefer my personal email address for correspondence.]}}
\date{March 14, 2026}

\newtheorem{definition}{Definition}[section]

\newtheorem{proposition}[definition]{Proposition}

\begin{document}

\maketitle

\begin{abstract}
This short essay celebrates the mathematical meaning of Pi Day through Euler's formula
\[
e^{ix}=\cos x+i\sin x,
\]
from which Euler's identity
\[
e^{i\pi}+1=0
\]
follows immediately. We briefly note the historical background of the formula, usually traced to Euler's \emph{Introductio in analysin infinitorum} (1748), while also mentioning Roger Cotes's earlier precursor of 1714. We compare Euler's identity, in an explicitly analogical way, with several famous formulas in physics in order to highlight its remarkable compactness and conceptual richness. We then consider a number of joyful variations arising from the same Eulerian source, including the negative-angle case, prime-number multiples, the substitution $x=\pi/2$, and a functional-equation variation of the form
\[
f(i\pi x)+1=0.
\]
This last variation leads naturally to a contrast between rigidity in the holomorphic setting and freedom in the discrete interpolation setting. The central aim is to organize these observations into two simple families of variations: geometric-angle variations and functional-equation variations.

The earlier part of the exposition is intended to be accessible to motivated high-school students, while the later discussion points toward more advanced ideas from complex analysis.
\end{abstract}

\noindent\textbf{Keywords:} Pi Day, Euler's formula, Euler's identity, Roger Cotes, complex exponential, identity theorem, interpolation, holomorphic rigidity

\medskip

\noindent\textbf{MSC2020:} 01A50, 30A10, 30E05

\tableofcontents

\section{Introduction}

March 14 is celebrated as the Day of $\pi$. For many students, $\pi$ first appears in geometry as the number associated with circles. One learns formulas such as the circumference $2\pi r$ and the area $\pi r^2$, and so $\pi$ first enters mathematics through familiar geometric pictures. Yet Pi Day can also be an occasion to remember that $\pi$ appears far beyond elementary geometry. In particular, it appears in one of the most famous formulas in all of mathematics, namely Euler's formula
\[
e^{ix}=\cos x+i\sin x.
\]

Historically, this formula is generally associated with Leonhard Euler and is usually traced to his \emph{Introductio in analysin infinitorum} of 1748 \cite{Euler1748,EulerBlanton,Calinger2016}. At the same time, historians of mathematics have often noted that Roger Cotes had already obtained an important precursor in 1714, expressed in logarithmic form rather than in the now standard exponential form \cite{Cotes1714,Stillwell2010,Maor1994}. Thus, from a bibliographical and historical point of view, it is reasonable to say that the standard formula is Euler's, while part of its conceptual background goes back to Cotes.

For the purposes of this essay, the main point is simple: Euler's identity
\[
e^{i\pi}+1=0
\]
is not an isolated miracle. It comes directly from Euler's formula. Indeed, if we substitute $x=\pi$, we obtain
\[
e^{i\pi}=\cos \pi+i\sin \pi.
\]
Since
\[
\cos \pi=-1
\qquad\text{and}\qquad
\sin \pi=0,
\]
it follows that
\[
e^{i\pi}=-1,
\]
and hence
\[
e^{i\pi}+1=0.
\]
Thus the famous identity is simply a special case of Euler's formula, obtained by choosing the angle $x=\pi$.

To celebrate Pi Day in a more playful spirit, I also prepared a small plate inspired by the formula $e^{i\pi}+1=0$, shown in Figure~\ref{fig:pi-day-plate}.

\begin{figure}[H]
\centering
\includegraphics[width=0.72\textwidth]{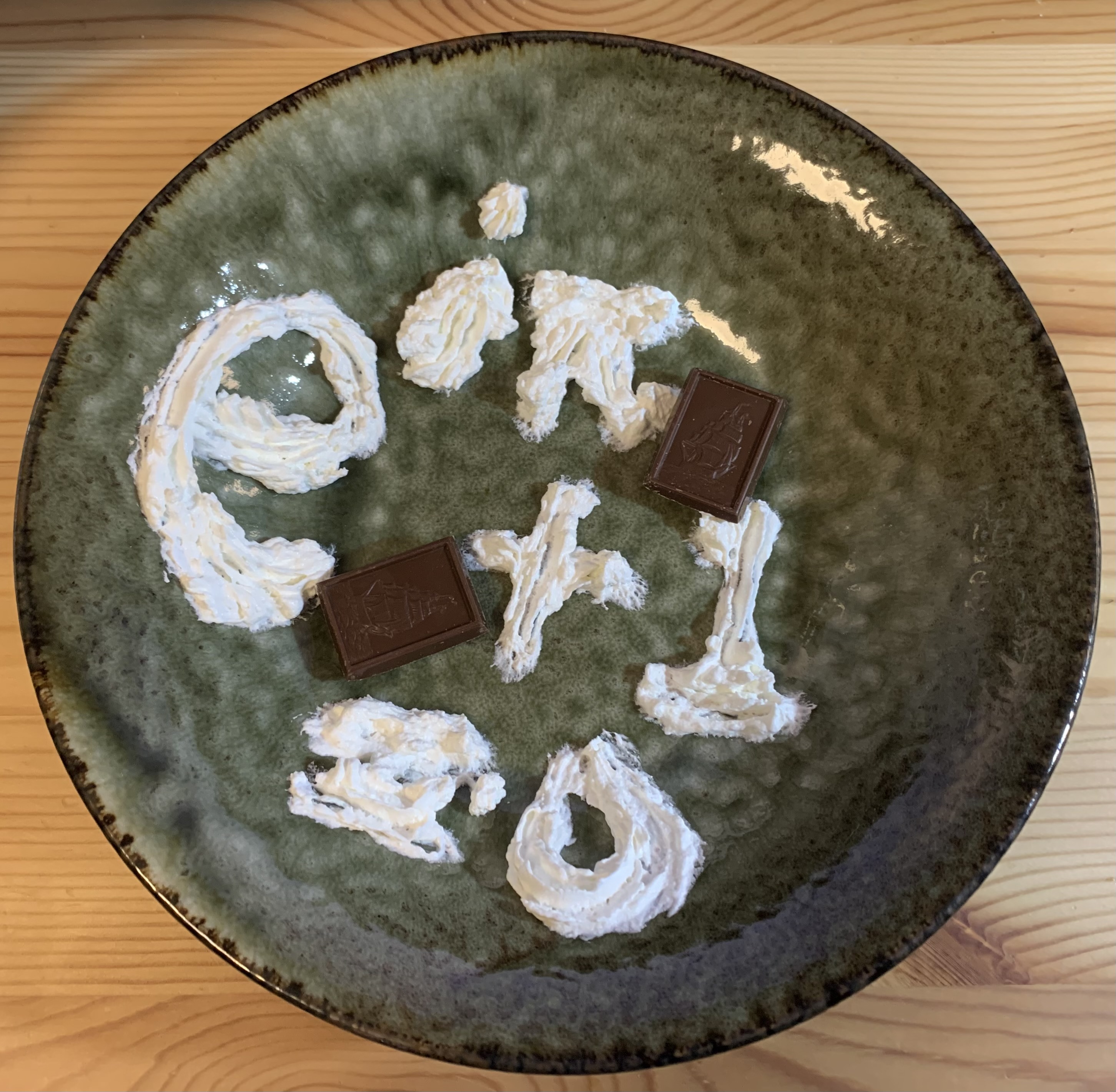}
\caption{A joyful Pi Day plate prepared by the author, inspired by Euler's identity $e^{i\pi}+1=0$.}
\label{fig:pi-day-plate}
\end{figure}

Why is this identity so often regarded as exceptional? One reason is that it brings together
\[
\pi,\qquad e,\qquad i,\qquad 1,\qquad 0
\]
in a single equation. These symbols come from very different parts of mathematics. The number $\pi$ comes from geometry and circles. The constant $e$ is the base of natural logarithms and belongs to the theory of growth, logarithms, and exponential functions. The symbol $i$ is the imaginary unit and belongs to the extension of the number system into the complex domain. The numbers $1$ and $0$ are among the most basic constants in arithmetic and algebra. Yet all of them appear together in one short formula.

It may be helpful to appreciate this fact by analogy with some famous formulas in physics. Hooke's law,
\[
f=-kx,
\]
contains two principal physical quantities, force and displacement, together with the constant $k$. Newton's equation,
\[
f=ma,
\]
connects three central physical quantities: force, mass, and acceleration. Einstein's well-known formula,
\[
E=mc^2,
\]
also connects three major physical notions: energy, mass, and the speed of light.

Of course, this comparison is only an analogy, and it should not be overstated. It is not a completely strict comparison of like with like. In the formulas of Hooke, Newton, and Einstein, one is mainly counting physical quantities, whereas in Euler's identity one is pointing to numbers, constants, and symbols. Thus, the comparison is not intended as a rigorous scientific classification. Rather, it is an illustrative analogy whose purpose is to highlight how unusually compact and conceptually rich Euler's identity is.

Seen in this light, Pi Day is not only a celebration of the decimal number $3.14$, and not only a celebration of circles. It is also a good opportunity to appreciate a deeper unity in mathematics. Euler's formula shows that exponential functions, trigonometric functions, and complex numbers are not separate topics, but are profoundly connected. Euler's identity, obtained from that formula by the single substitution $x=\pi$, is then a particularly striking instance of this unity.

For historical background on Euler's mathematical work more broadly, see \cite{BradleySandifer2007,Dunham1999,Calinger2016}.

At the same time, the classical identity is not the end of the story. Once one takes Euler's formula as the true source, one can begin to form natural variations by changing the angle. Some of these preserve the same basic pattern, while others shift the resulting value from the real axis to the imaginary axis. Still others lead to small functional-equation problems whose behavior depends on whether one works continuously or discretely. In this sense, Euler's identity may be read not only as a final destination, but also as the opening theme of a small mathematical set of variations.

The aim of the present note is to make this point in a focused way. After the introductory discussion, the variations are organized into two broad families: geometric-angle variations obtained by changing the input of Euler's formula, and functional-equation variations obtained by prescribing values along the imaginary axis. This two-part organization is intended to make the mathematical theme of the paper more explicit.

For this reason, the Day of $\pi$ can be celebrated not merely as a playful date on the calendar, but also as a reminder of one of the clearest examples of mathematical connection and elegance. The formula
\[
e^{i\pi}+1=0
\]
is short enough to be written on a single line, yet rich enough to bring together several of the most important ideas in mathematics. That is why it continues to fascinate students, teachers, and mathematicians alike.

The exposition is intentionally written in a layered way: the earlier sections are meant to be readable even for motivated high-school students, while the later sections gradually lead toward more advanced viewpoints from complex analysis.

\section{Geometric-Angle Variations on Euler's Identity}

The introduction has focused on the classical identity
\[
e^{i\pi}+1=0,
\]
derived from Euler's formula
\[
e^{ix}=\cos x+i\sin x.
\]
From this point, however, one may begin a number of natural variations. These variations are not meant to replace the classical identity, but rather to show how flexible and suggestive Euler's formula is. The organizing principle in the present section is simple: we vary the angle and observe how the corresponding endpoint in the complex plane changes.

\subsection{The negative-angle variation}

A first variation is obtained by replacing $\pi$ with $-\pi$. Then Euler's formula gives
\[
e^{-i\pi}=\cos(-\pi)+i\sin(-\pi).
\]
Since cosine is an even function and sine is an odd function, we have
\[
\cos(-\pi)=\cos(\pi)=-1,
\qquad
\sin(-\pi)=-\sin(\pi)=0.
\]
Hence
\[
e^{-i\pi}=-1,
\]
and therefore
\[
e^{-i\pi}+1=0.
\]

This identity is numerically the same as the classical one, but conceptually it adds a new feature: the explicit appearance of a negative sign in the exponent. If one wishes to read Euler's classical identity as a unification of the five symbols
\[
\pi,\qquad e,\qquad i,\qquad 1,\qquad 0,
\]
then the formula
\[
e^{-i\pi}+1=0
\]
may be viewed as a variation in which the idea of negativity is also explicitly present. In that suggestive sense, one may speak of a union of six concepts rather than five. Of course, this is an interpretive remark rather than a strict mathematical count, since the minus sign is an operation rather than a number on the same footing as the others.

\subsection{A prime-number variation}

One may next ask what happens if $\pi$ is replaced by $\pi p$, where $p$ is a prime number. Then
\[
e^{-i\pi p}
 = \cos(\pi p)-i\sin(\pi p).
\]
If $p$ is an odd prime, then $p$ is an odd integer, and so
\[
\cos(\pi p)=-1,
\qquad
\sin(\pi p)=0.
\]
Therefore
\[
e^{-i\pi p}=-1
\qquad\text{for every odd prime }p.
\]
It follows that
\[
e^{-i\pi p}+1=0
\qquad\text{for every odd prime }p.
\]

This gives a genuine prime-number variation of Euler's identity. It shows that the same formal pattern reappears for all odd primes. In this sense, the identity is not tied only to the single angle $\pi$, but extends periodically along odd integral multiples of $\pi$.

It is worth noting, however, that the formula
\[
e^{-i\pi p}+p=0
\]
does \emph{not} hold in general. Indeed, for every odd prime $p$ one has
\[
e^{-i\pi p}=-1,
\]
so that
\[
e^{-i\pi p}+p=p-1,
\]
which is not zero. Thus the correct prime-number analogue is
\[
e^{-i\pi p}+1=0
\qquad (p\text{ an odd prime}).
\]

For the exceptional prime $p=2$, one has
\[
e^{-2i\pi}=\cos(2\pi)-i\sin(2\pi)=1,
\]
so that
\[
e^{-2i\pi}+1=2.
\]
Thus the prime $2$ behaves differently from the odd primes, as one would expect from its special parity.

\subsection{The quarter-turn variation: replacing $\pi$ by $\pi/2$}

Another natural variation arises by replacing $\pi$ with $\pi/2$. Then Euler's formula gives
\[
e^{i\pi/2}=\cos(\pi/2)+i\sin(\pi/2)=i,
\]
and similarly
\[
e^{-i\pi/2}=\cos(-\pi/2)+i\sin(-\pi/2)=-i.
\]
Hence we obtain the identities
\[
e^{i\pi/2}-i=0
\qquad\text{and}\qquad
 e^{-i\pi/2}+i=0.
\]

These formulas are different in character from Euler's identity
\[
e^{i\pi}+1=0.
\]
In the classical case, the exponential expression lands on the real number $-1$, and so the equation closes with the basic real numbers $1$ and $0$. By contrast, when the angle is $\pi/2$, the exponential expression lands on the imaginary unit itself. Thus the result is not an identity involving $1$ and $0$ alone, but one in which $i$ remains visible on the surface.

Geometrically, this is natural. Multiplication by $e^{ix}$ corresponds to rotation in the complex plane by angle $x$. The angle $\pi$ represents a half-turn, sending $1$ to $-1$, while the angle $\pi/2$ represents a quarter-turn, sending $1$ to $i$. Thus the passage from $\pi$ to $\pi/2$ changes the endpoint from the negative real axis to the positive imaginary axis.

One may therefore say that the substitution $x=\pi$ produces a particularly balanced identity between exponentials, geometry, and arithmetic, whereas the substitution $x=\pi/2$ produces a more visibly complex-valued variation. Both arise naturally from the same Eulerian source, but they highlight different aspects of the formula.

\subsection{A general perspective}

These examples show that Euler's identity is only one distinguished member of a wider family of relations generated by Euler's formula. The classical equation
\[
e^{i\pi}+1=0
\]
remains special because it joins together several of the most fundamental constants and symbols of mathematics in an exceptionally compact form. Yet the nearby formulas
\[
e^{-i\pi}+1=0,\quad
e^{-i\pi p}+1=0\ (p\text{ odd prime}),\quad
e^{-i\pi/2}+i=0, \quad
e^{i\pi/2}-i=0
\]
show that this famous identity also invites systematic variation.

In this sense, Euler's formula is not merely the source of a single celebrated equation. It is the source of a small family of related mathematical variations.

\begin{figure}[H]
\centering
\includegraphics[width=0.82\textwidth]{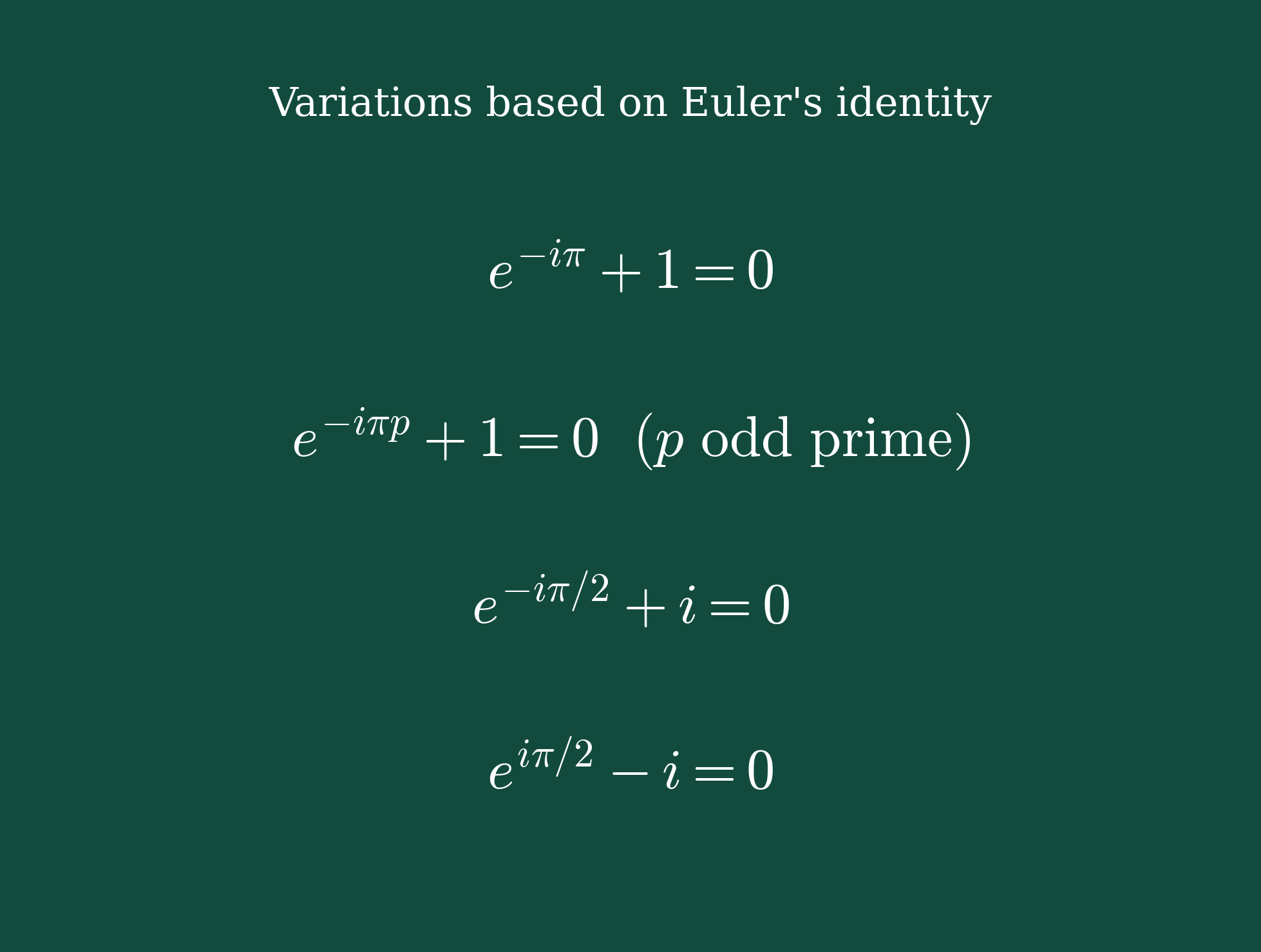}
\caption{Four joyful variations on Euler's identity.}
\label{fig:pi-variations-green}
\end{figure}

\section{A Functional-Equation Variation}

The previous section considered several explicit variations obtained by substituting special angles into Euler's formula. One may also move in a different direction and ask for a function $f$ satisfying
\[
f(i\pi x)+1=0.
\]
Equivalently,
\[
f(i\pi x)=-1.
\]
At first sight, this may look like a new kind of Eulerian functional equation. However, its mathematical nature depends strongly on what set of values of $x$ is allowed, and on what regularity is imposed on the function $f$. The organizing principle of the present section is the contrast between holomorphic rigidity on continuous sets and interpolation freedom on discrete sets.

\subsection{The continuous case}

Suppose first that the relation
\[
f(i\pi x)=-1
\qquad (x\in\mathbb R)
\]
is required for all real numbers $x$. Then $i\pi x$ runs through the entire imaginary axis. In other words, the condition says that $f$ takes the constant value $-1$ on the whole imaginary axis.

Now assume that $f$ is holomorphic on a connected open set containing the imaginary axis. Consider the function
\[
g(z):=f(z)+1.
\]
Then $g$ is holomorphic, and the condition above implies that
\[
g(i\pi x)=0
\qquad (x\in\mathbb R).
\]
Thus $g$ vanishes on an infinite set having accumulation points in its domain. By the identity theorem for holomorphic functions, it follows that
\[
g(z)\equiv 0,
\]
and hence
\[
f(z)\equiv -1.
\]
So in the holomorphic setting, the functional equation has only the trivial solution. This shows that, in the continuous case, the problem is actually too rigid to produce an interesting family of analytic solutions \cite{Conway1978,SteinShakarchi2003}.

\begin{proposition}
Let $U\subseteq \mathbb C$ be a connected open set containing the imaginary axis. If $f:U\to\mathbb C$ is holomorphic and satisfies
\[
f(i\pi x)=-1\qquad (x\in\mathbb R),
\]
then $f\equiv -1$ on $U$.
\end{proposition}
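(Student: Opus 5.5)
The plan is to reduce the statement to the identity theorem for holomorphic functions on a connected open set, exactly as in the preceding discussion. First I introduce the auxiliary function $g:=f+1$ on $U$. It is holomorphic, since it differs from $f$ by a constant, and the hypothesis says precisely that $g$ vanishes on the set
\[
Z:=\{\,i\pi x : x\in\mathbb R\,\}=i\mathbb R ,
\]
that is, on the whole imaginary axis. By assumption $i\mathbb R\subseteq U$.

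Next I exhibit an accumulation point of the zero set that lies in $U$. The natural choice is $0=i\pi\cdot 0$. The points $z_n:=i\pi/n$ for $n\ge 1$ are distinct, lie in $Z\subseteq U$, satisfy $g(z_n)=0$, and converge to $0\in U$. Hence the zero set of $g$ has a limit point inside the domain, and in fact every point of $i\mathbb R$ is such a limit point.

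Finally I invoke the identity theorem (see \cite{Conway1978,SteinShakarchi2003}). If a holomorphic function on a \emph{connected} open set has zeros accumulating at a point of that set, it vanishes identically. It follows that $g\equiv 0$ on $U$, and therefore $f\equiv -1$ on $U$.

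If one prefers a more self-contained argument, there is an alternative route. All derivatives of $g$ at $0$ vanish: along the imaginary direction, $g^{(k)}(0)=i^{-k}\frac{d^k}{dt^k}g(it)\big|_{t=0}=0$. So $g$ is zero on a disc around $0$. The set of points where all derivatives of $g$ vanish is then nonempty, open (by power series expansion) and closed (by continuity of derivatives), hence equal to $U$ by connectedness.

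There is no serious obstacle here. The one point that needs care is the role of connectedness. Without it, $f$ could differ from $-1$ on a component of $U$ that does not meet the imaginary axis. The hypothesis that $U$ is connected is exactly what rules this out, and I would point this out explicitly in the write-up.
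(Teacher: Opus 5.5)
Your proposal is correct and takes the same approach as the paper: set $g=f+1$, observe that it vanishes on the whole imaginary axis, which has accumulation points in $U$ (for example $0$), and apply the identity theorem on the connected set $U$. The explicit sequence $i\pi/n\to 0$ and the optional derivative-based argument only spell out details the paper leaves implicit.
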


\begin{proof}
Apply the identity theorem to $g(z)=f(z)+1$, which vanishes on the set $\{i\pi x:x\in\mathbb R\}$.
\end{proof}

\subsection{The unrestricted case}

If no regularity is imposed on $f$, the picture changes completely. In that case, one may define $f$ arbitrarily away from the imaginary axis, while requiring only that
\[
f(iy)=-1
\]
for all real $y$. Hence there are infinitely many such functions. From this point of view, the equation is no longer a rigid analytic condition, but rather a prescribed-value condition on a particular subset of the complex plane.

Thus the same formal expression
\[
f(i\pi x)+1=0
\]
has two very different meanings. In the holomorphic category it forces $f$ to be constant, while in the unrestricted category it leaves enormous freedom.

\subsection{The discrete case and interpolation}

A more interesting variation arises when the parameter $x$ is restricted to a discrete set, for example the integers:
\[
f(i\pi n)=-1
\qquad (n\in\mathbb Z).
\]
Now the condition is imposed only on a discrete subset of the imaginary axis. In this setting, one is naturally led to an interpolation problem: can one construct a holomorphic or entire function taking prescribed values on a discrete set?

This is a classical direction in complex analysis. Once the points are discrete, the condition no longer forces constancy by the identity theorem, because there is no accumulation point in the finite plane. Instead, one enters the realm of interpolation theory for analytic or entire functions. In this sense, the discrete version is mathematically much richer than the continuous one \cite{Boas1954,Levin1996,Conway1978}.

In fact, this is not merely an existence question: one can write down an explicit nonconstant entire solution. Namely,
\[
f(z)=-1+\sinh^2 z
\]
satisfies
\[
f(i\pi n)=-1
\qquad (n\in\mathbb Z),
\]
because
\[
\sinh(i\pi n)=i\sin(\pi n)=0.
\]
Thus the discrete problem admits concrete nontrivial solutions.

The same observation applies if one restricts to odd primes:
\[
f(i\pi p)=-1
\qquad (p\ \text{odd prime}).
\]
Again, this becomes a discrete interpolation problem rather than a rigidity statement. One may then ask about existence, explicit construction, growth, or uniqueness under additional constraints.

\begin{proposition}
For any prescribed constant $c\in\mathbb C$, the family
\[
f(z)=c+a\sinh^2 z\qquad (a\in\mathbb C)
\]
consists of entire functions satisfying
\[
f(i\pi n)=c\qquad (n\in\mathbb Z).
\]
More generally, every function of the form
\[
f(z)=c+\sinh z\,g(z),
\]
with $g$ entire, satisfies the same condition.
\end{proposition}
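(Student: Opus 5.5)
The plan is to reduce everything to the single fact that $\sinh$ vanishes at the points $i\pi n$, and then use closure of entire functions under sums and products. First I would verify that $\sinh(i\pi n)=0$ for all $n\in\mathbb Z$. Starting from the definition $\sinh z=(e^z-e^{-z})/2$ and applying Euler's formula, one obtains
\[
\sinh(iy)=\frac{e^{iy}-e^{-iy}}{2}=i\sin y \qquad (y\in\mathbb R).
\]
Taking $y=\pi n$ gives $\sinh(i\pi n)=i\sin(\pi n)=0$, exactly as in the explicit example $f(z)=-1+\sinh^2 z$ discussed just before the statement. Alternatively, one can use $e^{i\pi n}=(e^{i\pi})^n=(-1)^n$ and $e^{-i\pi n}=(-1)^{-n}=(-1)^n$, so the two exponentials cancel. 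This ties the proposition back to Euler's identity itself.

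Next I would prove the general statement, since the first family is a special case of it. Let $g$ be entire and set $f(z)=c+\sinh z\,g(z)$. The function $\sinh$ is entire, being a linear combination of $e^{z}$ and $e^{-z}$. Products and sums of entire functions are entire, and constants are entire, so $f$ is entire. Evaluating at $z=i\pi n$ gives
\[
f(i\pi n)=c+\sinh(i\pi n)\,g(i\pi n)=c+0\cdot g(i\pi n)=c,
\]
and this holds for every $n\in\mathbb Z$.

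For the first family, I would take $g(z)=a\sinh z$ with $a\in\mathbb C$. This $g$ is entire, and $\sinh z\,g(z)=a\sinh^2 z$, so $f(z)=c+a\sinh^2 z$ is an instance of the general form and satisfies $f(i\pi n)=c$.

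There is no genuine obstacle here. The only step that needs any care is the vanishing $\sinh(i\pi n)=0$, and it follows in one line from Euler's formula. It may also be worth remarking that for $a\neq 0$ (respectively $g\not\equiv 0$) these solutions are nonconstant. This contrasts with the rigidity in the preceding Proposition: the set $\{i\pi n\}$ has no finite accumulation point, so the identity theorem does not apply.
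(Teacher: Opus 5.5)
Your proof is correct and follows the paper's own argument, which likewise rests entirely on $\sinh(i\pi n)=i\sin(\pi n)=0$ and then evaluates both formulas at $z=i\pi n$. Your additional checks are sound and only make explicit what the paper leaves implicit: entireness by closure under sums and products, and obtaining the first family from the general form with $g(z)=a\sinh z$.
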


\begin{proof}
Since $\sinh(i\pi n)=i\sin(\pi n)=0$ for every integer $n$, both formulas give $f(i\pi n)=c$.
\end{proof}

\subsection{A further value-prescription variation: the case $f(i\pi x)=1/2$}

A closely related variation is obtained by replacing the value $-1$ with the value $1/2$. Thus we ask for a function $f$ satisfying
\[
f(i\pi x)=\frac12.
\]
Equivalently,
\[
f(i\pi x)-\frac12=0.
\]

The mathematical structure of this problem is essentially the same as in the previous case. If the relation is required for all real $x$, then $i\pi x$ runs through the whole imaginary axis, so the condition says that $f$ is equal to the constant $1/2$ on that axis.

Now assume that $f$ is holomorphic on a connected open set containing the imaginary axis, and define
\[
g(z):=f(z)-\frac12.
\]
Then $g$ is holomorphic, and
\[
g(i\pi x)=0
\qquad (x\in\mathbb R).
\]
Hence $g$ vanishes on a set having accumulation points in its domain. By the identity theorem for holomorphic functions, it follows that
\[
g(z)\equiv 0,
\]
and therefore
\[
f(z)\equiv \frac12.
\]
So, just as in the case $f(i\pi x)=-1$, the continuous holomorphic version is completely rigid.

On the other hand, if no regularity is imposed on $f$, then there are infinitely many such functions: one may simply define $f$ to be $1/2$ on the imaginary axis and choose its values arbitrarily elsewhere. Thus, in the unrestricted setting, the condition becomes only a prescribed-value condition.

As before, the most interesting case is the discrete one. For example, one may ask for an entire function satisfying
\[
f(i\pi n)=\frac12
\qquad (n\in\mathbb Z).
\]
In fact, one can again write down an explicit nonconstant entire solution:
\[
f(z)=\frac12+\sinh^2 z.
\]
Indeed,
\[
\sinh(i\pi n)=i\sin(\pi n)=0,
\]
so that
\[
f(i\pi n)=\frac12+\sinh^2(i\pi n)=\frac12.
\]

Similarly, one may consider
\[
f(i\pi p)=\frac12
\qquad (p\ \text{odd prime}),
\]
which is again a discrete interpolation problem.

This variation is conceptually instructive. The value $-1$ is distinguished because it is exactly the value produced by Euler's formula at the angle $\pi$. By contrast, the value $1/2$ has no such immediate Eulerian privilege. Thus the equation
\[
f(i\pi x)=\frac12
\]
shows more clearly that the underlying issue is not the special arithmetic of Euler's identity alone, but the broader analytic contrast between rigidity on continuous sets and freedom on discrete sets.

\subsection{A natural conclusion}

Thus the expression
\[
f(i\pi x)+1=0
\]
does not seem to define a famous special functional equation in its own right. Rather, it opens two contrasting directions. If the variable $x$ ranges continuously and $f$ is holomorphic, the equation collapses to the trivial constant solution
\[
f\equiv -1.
\]
If, on the other hand, the variable is restricted to a discrete set such as the integers or the odd primes, the problem becomes a natural interpolation question in complex analysis, and even explicit nonconstant entire solutions can be exhibited.

The same pattern persists when one replaces the value $-1$ by another prescribed constant such as $1/2$. In the continuous holomorphic setting, one again obtains only the constant solution, whereas in the discrete setting one is led to interpolation problems and explicit examples.

In this way, the functional-equation variation continues the theme of the previous sections. Euler's identity suggests not only isolated formulas, but also broader families of problems in which rigidity and freedom coexist in an instructive balance.

\bigskip

\noindent Takao Inou\'{e}

\noindent Faculty of Informatics

\noindent Yamato University

\noindent Katayama-cho 2-5-1, Suita, Osaka, 564-0082, Japan

\noindent inoue.takao@yamato-u.ac.jp
 
\noindent (Personal) takaoapple@gmail.com (I prefer my personal mail)
\bigskip

\end{document}